\begin{document}
\topmargin= -.2in \baselineskip=20pt

\def\bbmu{\mu}

\newtheorem{theorem}{Theorem}[section]
\newtheorem{proposition}[theorem]{Proposition}
\newtheorem{lemma}[theorem]{Lemma}
\newtheorem{corollary}[theorem]{Corollary}
\newtheorem{conjecture}[theorem]{Conjecture}
\theoremstyle{remark}
\newtheorem{remark}[theorem]{Remark}

\title {A Tauberian Theorem for $\ell$-adic Sheaves on $\mathbb
A^1$\\
{\small \it To Wang Yuan on his 80th birthday}
\thanks{The research is supported by the NSFC.}}

\author {Lei Fu\\
{\small Chern Institute of Mathematics and LPMC, Nankai University,
Tianjin 300071, P. R. China}\\
{\small leifu@nankai.edu.cn}}

\date{}
\maketitle

\begin{abstract}
Let $K\in L^1(\mathbb R)$ and let $f\in L^\infty(\mathbb R)$ be two
functions on $\mathbb R$. The convolution
$$(K\ast f)(x)=\int_{\mathbb R}K(x-y)f(y)dy$$ can be considered
as an average of $f$ with weight defined by $K$. Wiener's Tauberian
theorem says that under suitable conditions, if
$$\lim_{x\to \infty}(K\ast f)(x)=\lim_{x\to \infty} (K\ast
A)(x)$$ for some constant $A$, then $$\lim_{x\to \infty}f(x)=A.$$ We
prove the following $\ell$-adic analogue of this theorem: Suppose
$K,F, G$ are perverse $\ell$-adic sheaves on the affine line
$\mathbb A$ over an algebraically closed field of characteristic $p$
($p\not=\ell$). Under suitable conditions, if
$$(K\ast F)|_{\eta_\infty}\cong
(K\ast G)|_{\eta_\infty},$$ then $$F|_{\eta_\infty}\cong
G|_{\eta_\infty},$$ where $\eta_\infty$ is the spectrum of the local
field of $\mathbb A$ at $\infty$.

\bigskip
\noindent {\bf Key words:} Tauberian theorem, $\ell$-adic Fourier
transformation.

\bigskip
\noindent {\bf Mathematics Subject Classification:} 14F20.

\end{abstract}

\section*{Introduction}

A Tauberian theorem is one in which the asymptotic behavior of a
sequence or a function is deduced from the behavior of some of its
average. The $\ell$-adic Fourier transform was first introduced by
Deligne in the study of exponential sums using $\ell$-adic
cohomology theory. It was further developed by Laumon \cite{L}. In
this paper, using the $\ell$-adic Fourier transform, we prove an
$\ell$-adic analogue of Wiener's Tauberian theorem in the classical
harmonic analysis. Our study shows that many results in the
classical harmonic analysis have $\ell$-adic analogues and this area
has not been fully explored. The result in this paper is absolutely
not in its final form.

For any $f_1,f_2\in L^1(\mathbb R)$, their convolution $f_1*f_2\in
L^1(\mathbb R)$ is defined to be
$$(f_1*f_2)(x)=\int_{\mathbb R}f_1(x-y)f_2(y)dy.$$ If we define the
product of two functions to be their convolution, then $L^1(\mathbb
R)$ becomes a Banach algebra. A function $f\in L^\infty(\mathbb R)$
is called {\it weakly oscillating} at $\infty$ if for any
$\epsilon>0$, there exist $N>0$ and $\delta>0$ such that for any
$x_1,x_2\in \mathbb R$ with the properties that $|x_1|,|x_2|>N$ and
$|x_1-x_2|<\delta$, we have
$$|f(x_1)-f(x_2)|\leq \epsilon.$$ Recall the following theorem (\cite{K} VIII
6.5).

\begin{theorem}[Wiener's Tauberian theorem] Let $K_1\in L^1(\mathbb
R)$ and $f\in L^\infty(\mathbb R)$.

(i) If $\lim_{x\to \infty} f(x)=A,$ then $$\lim_{x\to\infty}
\int_{\mathbb R} K_1(x-y) f(y)dy =A\int_{\mathbb R} K_1(x)dx.$$

(ii) Suppose the Fourier transform
$$\hat K_1(\xi)=\int_{\mathbb R}K_1(x)e^{i\xi x} dx$$ of $K_1$ has the
property $\hat K_1(\xi)\not =0$ for all $\xi \in \mathbb R$ and
suppose
$$\lim_{x\to\infty} \int_{\mathbb R} K_1(x-y) f(y)dy =A\int_{\mathbb R}
K_1(x)dx.$$ Then
$$\lim_{x\to\infty} \int_{\mathbb R} K_2(x-y) f(y)dy=A\int_{\mathbb R}
K_2(x)dx$$ for all $K_2\in L^1(\mathbb R)$. Suppose furthermore that
$f$ is weakly oscillating at $\infty$. Then we have $\lim_{x\to
\infty} f(x)=A.$
\end{theorem}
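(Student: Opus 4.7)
For part (i), I would rewrite the difference as
$$(K_1 * f)(x) - A\int_{\mathbb R} K_1(t)\,dt = \int_{\mathbb R} K_1(x-y)(f(y)-A)\,dy.$$
Given $\epsilon > 0$, choose $N$ large enough that $|f(y) - A| < \epsilon$ for $|y| > N$, and split the integral at $|y| = N$. The contribution from $|y| > N$ is at most $\epsilon\|K_1\|_1$, while the contribution from $|y|\le N$ is dominated by $\|f\|_\infty \int_{|y|\le N}|K_1(x-y)|\,dy$, which tends to $0$ as $x\to\infty$ since $K_1\in L^1(\mathbb R)$ and the interval $[x-N,x+N]$ escapes to infinity.

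For the first assertion of part (ii), I would introduce the set
$$I = \Bigl\{K\in L^1(\mathbb R) : \lim_{x\to\infty}(K*f)(x) = A\int_{\mathbb R} K(t)\,dt\Bigr\}.$$
Using $\|K*f\|_\infty \le \|K\|_1\|f\|_\infty$, one verifies directly that $I$ is a closed linear subspace of $L^1(\mathbb R)$ containing $K_1$, that it is invariant under translation, and hence (by approximating convolutions by linear combinations of translates and invoking Fubini) that it is a closed ideal of the convolution Banach algebra $L^1(\mathbb R)$. Since $\hat K_1$ is nowhere vanishing, Wiener's approximation theorem asserts that the closed ideal generated by $K_1$ is all of $L^1(\mathbb R)$, so $I = L^1(\mathbb R)$, which gives the conclusion for every $K_2$.

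For the final assertion, I would exploit weak oscillation via an approximate identity. Fix a nonnegative $K_2\in L^1(\mathbb R)$ supported in $[-\delta,\delta]$ with $\int_{\mathbb R} K_2\,dx = 1$. Given $\epsilon > 0$, weak oscillation produces $N$ and $\delta$ such that $|f(x-y) - f(x)| \le \epsilon$ whenever $|x| > N+\delta$ and $|y|\le\delta$; then for such $x$,
$$|(K_2*f)(x) - f(x)| = \Bigl|\int_{\mathbb R} K_2(y)(f(x-y) - f(x))\,dy\Bigr|\le\epsilon.$$
Combined with $(K_2*f)(x)\to A$ from the previous step, this yields $\limsup_{x\to\infty}|f(x) - A|\le\epsilon$, and since $\epsilon$ was arbitrary, $f(x)\to A$.

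The main obstacle is Wiener's approximation theorem itself. Its standard proof uses the Gelfand theory of the commutative Banach algebra $L^1(\mathbb R)$ under convolution: the Fourier transform identifies the maximal ideal space with $\mathbb R$, and a closed ideal fails to be everything precisely when all its elements share a common zero of the Fourier transform. It is this Banach-algebra mechanism that will have to be replaced by a sheaf-theoretic analogue, in which role the $\ell$-adic Fourier transform of Deligne and Laumon will be cast in the remainder of the paper.
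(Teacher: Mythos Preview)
Your argument is correct and, for part (ii), matches the paper's sketch exactly: introduce the set $I$, observe it is a closed translation-invariant subspace and hence a closed ideal, invoke Wiener's theorem to conclude $I=L^1(\mathbb R)$, and then test against a kernel of small support to pass from $(K_2*f)(x)\to A$ to $f(x)\to A$ via weak oscillation (the paper uses the specific choice $K_2=\frac{1}{h}\mathbf 1_{[0,h]}$ rather than a general approximate identity, but the idea is identical). The paper does not prove (i); your argument there is the standard one, with the minor caveat that the hypothesis is one-sided ($x\to+\infty$), so you should split at $y\le N$ versus $y>N$ rather than $|y|\le N$ versus $|y|>N$---the tail $\int_{y\le N}|K_1(x-y)|\,dy=\int_{x-N}^\infty|K_1(u)|\,du$ still goes to zero.
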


We quickly recall a proof of (ii). Let
$$I=\{K\in L^1(\mathbb R)|\lim_{x\to\infty} \int_{\mathbb R} K(x-y) f(y)
dy=A\int_{\mathbb R} K(x)dx\}.$$ Then $I$ is a closed linear
subspace of $L^1(\mathbb R)$. If $K\in I$, then for any $y\in\mathbb
R$, the translation $K_y$ of $K$ defined by $K_y(x)=K(x-y)$ lies in
$I$. This implies that $I$ is a closed ideal of the Banach algebra
$L^1(\mathbb R)$. Since $\hat K_1(\xi)\not=0$ for all $\xi$, by a
theorem of Wiener (\cite{K} VIII 6.3), for any $g\in L^1(\mathbb R)$
such that $\hat g$ has compact support, their exists $g_1\in
L^1(\mathbb R)$ such that $\hat g=\hat g_1 \hat K_1$, which implies
that $g=g_1*K_1$. So the closure of the ideal generated by $K_1$ is
$L^1(\mathbb R)$. We have $K_1\in I$, so we have $I=L^1(\mathbb R)$.
Hence for any $K_2\in L^1(\mathbb R)$, we have
$$\lim_{x\to\infty} \int_{\mathbb R} K_2(x-y) f(y)dy=A\int_{\mathbb R}
K_2(x)dx.$$ For any $h>0$, taking
$$K_2(x)=\left\{
\begin{array}{cc}
\frac{1}{h}&\hbox { if } x\in [0,h], \\
0&\hbox { if }x\not\in [0,h],
\end{array}
\right.$$  we get
$$\lim_{x\to\infty}\frac{1}{h}\int_{x-h}^xf(y)dy=A.$$ If $f$ is weakly
oscillating at $\infty$, this implies that $\lim_{x\to \infty}
f(x)=A.$

\medskip
In this paper, we study an analogue of the above result for
$\ell$-adic sheaves on the affine line. Throughout this paper, $p$
is a prime number, $k$ is an algebraically closed field of
characteristic $p$, $\mathbb F_p$ is the finite field with $p$
elements contained in $k$, $\ell$ is a prime number distinct from
$p$, and $\psi:\mathbb F_p\to\overline {\mathbb Q}_\ell^\ast$ is a
fixed nontrivial additive character. Let $\mathbb
A=\mathrm{Spec}\,k[x]$ be the affine line. The Artin-Schreier
morphism
$$\wp: \mathbb A\to \mathbb A$$
corresponding to the $k$-algebra homomorphism
$$k[t]\to k[t],\; t\mapsto t^p-t$$
is a finite Galois \'etale covering space, and it defines an
${\mathbb F}_p$-torsor
$$0\to {\mathbb F}_p\to \mathbb A\stackrel {\wp}\to \mathbb A\to
0.$$ Pushing-forward this torsor by $\psi^{-1}$, we get a lisse
$\overline{\mathbb Q}_\ell$-sheaf ${\mathscr L}_\psi$ of rank $1$ on
$\mathbb A$. Let $\mathbb A'=\mathrm{Spec}\, k[x']$ be another copy
of the affine line, let
$$\pi:\mathbb A\times_k \mathbb A'\to \mathbb A,
\; \pi':\mathbb A\times_k \mathbb A'\to \mathbb A'$$ be the
projections, and let $\mathscr L_\psi(xx')$ be the inverse image of
$\mathscr L_\psi$ under the $k$-morphism
$$\mathbb A\times_k\mathbb A'\to \mathbb A,\; (x,x')\mapsto xx'$$ corresponding
to the $k$-algebra homomorphism
$$k[t]\to k[x,x'],\; t\mapsto xx'.$$
For any object $K$ in the triangulated category $D_c^b(\mathbb A,
\overline{\mathbb Q}_\ell)$ defined in \cite{D} 1.1, the Fourier
transform $\mathscr F(K)\in\mathrm{ob}\,D_c^b(\mathbb A
,\overline{\mathbb Q}_\ell)$ of $K$ is defined to be
$$\mathscr F(K)=R\pi'_! (\pi^\ast K\otimes \mathscr L_\psi(xx'))[1].$$
Let $$s:\mathbb A\times_k\mathbb A\to \mathbb A,\; (x,y)\mapsto
x+y$$ be the $k$-morphism  corresponding to the $k$-algebra
homomorphism
$$k[t]\to k[x,y],\; t\mapsto x+y,$$ and let
$$p_1, p_2:\mathbb A\times_k \mathbb A\to \mathbb A$$ be the
projections. For any $K_1,K_2\in\mathrm{ob}\,D_c^b(\mathbb
A,\overline{\mathbb Q}_\ell)$, define their convolution $K_1\ast
K_2\in \mathrm{ob}\, D_c^b(\mathbb A, \overline{\mathbb Q}_\ell)$ to
be
$$K_1*K_2=Rs_!(p_1^\ast K_1\otimes p_2^\ast K_2).$$

Let $F\in D_c^b(\mathbb A,\overline{\mathbb Q}_\ell)$. We say $F$ is
a perverse sheaf (confer \cite{BBD}) if $\mathscr H^0(F)$ has finite
support, $\mathscr H^{-1}(K)$ has no sections with finite support,
and $\mathscr H^i(K)=0$ for $i\not=0,1$.  The Fourier transform of a
perverse sheaf on $\mathbb A$ is a perverse sheaf on $\mathbb A'$.

Let $\mathbb P=\mathbb A\cup \{\infty\}$ and $\mathbb P'=\mathbb
A'\cup \{\infty'\}$ be the smooth compactifications of $\mathbb A$
and $\mathbb A'$, respectively. They are projective lines. For any
Zariski closed point $x$ (resp. $x'$) in $\mathbb P$ (resp. $\mathbb
P'$), let $\eta_x$ (resp. $\eta_{x'}$) be the generic point of the
henselization of $\mathbb P$ (resp. $\mathbb P'$) at $x$ (resp.
$x'$), and let $\bar \eta_x$ (resp. $\bar \eta_{x'})$ be a geometric
point above $\eta_x$ (resp. $\eta_{x'}$). On
$\mathrm{Gal}(\bar\eta_x/\eta_x)$ (resp.
$\mathrm{Gal}(\bar\eta_{x'}/\eta_{x'})$), we have a filtration by
ramification subgroups in upper numbering. We can use this
filtration to define the breaks of $\overline {\mathbb
Q}_\ell$-representations of $\mathrm{Gal}(\bar\eta_x/\eta_x)$ (resp.
$\mathrm{Gal}(\bar\eta_{x'}/\eta_{x'})$). For any perverse sheaf $F$
on $\mathbb A$, $\mathscr H^{-1}(F)_{\bar \eta_x}$ is a $\overline
{\mathbb Q}_\ell$-representation of
$\mathrm{Gal}(\bar\eta_x/\eta_x)$. Confer \cite{L} for the
definition of the local Fourier transform $\mathscr F^{(x,x')}$.

\begin{theorem}[Tauberian theorem] Let $K\in\mathrm{ob}\,D_c^b(\mathbb A,
\overline{\mathbb Q}_l)$ be a perverse sheaf on $\mathbb A$. Suppose
the Fourier transform $\mathscr F(K)$ is of the form $L[1]$ for some
lisse $\overline{\mathbb Q}_\ell$-sheaf $L$ on $\mathbb A'$. Let $M,
N$ be lisse $\overline{\mathbb Q}_l$-sheaves on $\mathbb A$. Then
$K*(M[1])$ and $K*(N[1])$ are perverse.

(i) If $M_{\bar\eta_\infty}\cong N_{\bar\eta_\infty}$, then
$\mathscr H^{-1}(K*(M[1]))_{\bar\eta_\infty}\cong \mathscr
H^{-1}(K*(N[1]))_{\bar\eta_\infty}$.

(ii) Suppose $L$ has rank $1$, and all the breaks of
$L_{\bar\eta_{\infty'}}\otimes \mathscr F^{(\infty,\infty')}
(M_{\bar\eta_{\infty}})$ and $L_{\bar\eta_{\infty'}}\otimes \mathscr
F^{(\infty,\infty')} (N_{\bar\eta_{\infty}})$ lie in $(1,\infty)$.
If $\mathscr H^{-1}(K*(M[1]))_{\bar\eta_\infty}\cong \mathscr
H^{-1}(K*(N[1]))_{\bar\eta_\infty}$, then $M_{\bar\eta_\infty}\cong
N_{\bar\eta_\infty}$.
\end{theorem}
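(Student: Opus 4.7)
The plan is to apply the Fourier transform $\mathscr F$ to convert convolution into tensor product, then extract the local behavior at $\bar\eta_\infty$ using Laumon's stationary phase principle.

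For perversity, the standard exchange property gives $\mathscr F(K*(M[1])) \cong L \otimes \mathscr F(M[1])$ up to shift and Tate twist. Since $\mathscr F(M[1])$ is perverse on $\mathbb A'$ (Fourier preserves perversity) and $L$ is lisse, the tensor product is perverse; since $\mathscr F^{-1}$ is $t$-exact for the perverse $t$-structure, $K*(M[1])$ is perverse. The same argument handles $K*(N[1])$.

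For part (i), I invoke Laumon's stationary phase for $\mathscr F(M[1])$: because $M$ is lisse on all of $\mathbb A$, one has $\mathscr F(M[1])_{\bar\eta_{\infty'}} \cong \mathscr F^{(\infty,\infty')}(M_{\bar\eta_\infty})$, and the finite singular points of $\mathscr F(M[1])$ on $\mathbb A'$ are controlled by the Fourier-character decomposition of $M_{\bar\eta_\infty}$, with local data determined by the corresponding tame pieces. An isomorphism $M_{\bar\eta_\infty} \cong N_{\bar\eta_\infty}$ therefore produces matching local data for $L\otimes\mathscr F(M[1])$ and $L\otimes\mathscr F(N[1])$ at every point of $\mathbb P'$. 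Applying the reverse stationary phase (for $\mathscr F^{-1}$) at $\bar\eta_\infty$ then expresses $\mathscr H^{-1}(K*(M[1]))_{\bar\eta_\infty}$ as a direct sum of local Fourier transforms $\mathscr F^{(x',\infty)}$ of these local data, so the isomorphism transfers to the $\bar\eta_\infty$-stalk.

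For part (ii), the breaks hypothesis is what permits the inversion. The inverse stationary phase decomposes $\mathscr H^{-1}(K*(M[1]))_{\bar\eta_\infty}$ as the $\bar\eta_{\infty'}$-contribution $\mathscr F^{(\infty',\infty)}\bigl(L_{\bar\eta_{\infty'}} \otimes \mathscr F^{(\infty,\infty')}(M_{\bar\eta_\infty})\bigr)$, all of whose breaks lie in $(1,\infty)$ by hypothesis, together with contributions from finite $x'_0 \in \mathbb A'$ whose breaks are at most $1$. The break filtration thus isolates the $\bar\eta_{\infty'}$-piece, so the hypothesized isomorphism yields
$$\mathscr F^{(\infty',\infty)}\bigl(L_{\bar\eta_{\infty'}} \otimes \mathscr F^{(\infty,\infty')}(M_{\bar\eta_\infty})\bigr) \cong \mathscr F^{(\infty',\infty)}\bigl(L_{\bar\eta_{\infty'}} \otimes \mathscr F^{(\infty,\infty')}(N_{\bar\eta_\infty})\bigr).$$
Since $\mathscr F^{(\infty,\infty')}$ and $\mathscr F^{(\infty',\infty)}$ are mutually inverse on representations with all breaks $>1$, I can strip off $\mathscr F^{(\infty',\infty)}$, cancel the invertible rank-$1$ factor $L_{\bar\eta_{\infty'}}$, and strip off $\mathscr F^{(\infty,\infty')}$ to conclude $M_{\bar\eta_\infty} \cong N_{\bar\eta_\infty}$.

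The main obstacle will be carrying out the inverse stationary phase carefully: I need to confirm that local Fourier transforms $\mathscr F^{(x'_0,\infty)}$ from finite $x'_0 \in \mathbb A'$ produce representations at $\bar\eta_\infty$ with breaks at most $1$, so that the break filtration under the hypothesis cleanly separates them from the $\bar\eta_{\infty'}$-contribution, and to verify that each local-Fourier equivalence is applied in its proper domain of validity so that the recovery of $M_{\bar\eta_\infty}$ is lossless.
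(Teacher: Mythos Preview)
Your strategy—Fourier transform to convert $\ast$ to $\otimes L$, then Laumon's stationary phase in both directions—is exactly the paper's approach, and your treatment of perversity and of part (i) is correct. Since $M$ is lisse on all of $\mathbb A$, every local invariant of $\mathscr F(M[1])$ on $\mathbb P'$ is indeed a functor of $M_{\bar\eta_\infty}$ alone, so matching at $\bar\eta_\infty$ forces matching of the full stationary-phase decomposition of $\mathscr H^{-1}(K*(M[1]))_{\bar\eta_\infty}$.

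There is, however, a real gap in your part (ii). After you isolate the $(1,\infty)$-break piece, strip off $\mathscr F^{(\infty',\infty)}$, cancel the rank-$1$ factor $L_{\bar\eta_{\infty'}}$, and strip off $\mathscr F^{(\infty,\infty')}$, what you recover is only
\[
M_{\bar\eta_\infty}^{(1,\infty)} \;\cong\; N_{\bar\eta_\infty}^{(1,\infty)},
\]
because $\mathscr F^{(\infty,\infty')}$ kills the slope-$\le 1$ part of its input (it is an equivalence only on the category of representations with all breaks $>1$). You have said nothing about $M_{\bar\eta_\infty}^{[0,1]}$ versus $N_{\bar\eta_\infty}^{[0,1]}$, yet these can certainly be nonzero (e.g.\ $M$ tame at $\infty$, or $M=\mathscr L_\psi$).

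The paper closes this gap by analyzing the $[0,1]$-break piece of $\mathscr H^{-1}(K*(M[1]))_{\bar\eta_\infty}$ separately. By stationary phase this piece is
\[
a_\ast\Big(\bigoplus_{s'} L_{\bar s'}\otimes R^0\Phi_{\bar\eta_\infty}\big(\bar\pi'^\ast\alpha'_!\mathscr F(M[1])\otimes\overline{\mathscr L}_\psi(xx')\big)_{(\infty,s')}\Big)(1),
\]
whereas the $[0,1]$-piece of $M_{\bar\eta_\infty}$ itself (via $M[1]\cong a_\ast\mathscr F'\mathscr F(M[1])(1)$) is the same expression without the factors $L_{\bar s'}$. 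Since $L$ has rank $1$, each $L_{\bar s'}$ is a one-dimensional $\overline{\mathbb Q}_\ell$-vector space with trivial Galois action, so tensoring by it changes nothing; hence the assumed isomorphism of $[0,1]$-pieces for $K\ast(M[1])$ and $K\ast(N[1])$ gives $M_{\bar\eta_\infty}^{[0,1]}\cong N_{\bar\eta_\infty}^{[0,1]}$ directly. Combine this with the $(1,\infty)$-part you already obtained. You should add this step; note that it is precisely here that the rank-$1$ hypothesis on $L$ is used a second time, independently of its role in cancelling $L_{\bar\eta_{\infty'}}$.
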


\begin{remark} In Wiener's Tauberian Theorem 0.1, we have $K_1\in
L^1(\mathbb R)$. This implies that $\hat K_1$ is a uniformly
continuous function on $\mathbb R$. This corresponds to the
condition in Theorem 0.2 that $\mathscr F(K)$ is of the form $L[1]$
for a lisse sheaf $L$ on $\mathbb A'$. There are many perverse
sheaves $K$ on $\mathbb A$ satisfying this condition. For example,
we can start with a lisse sheaf $L$ on $\mathbb A'$, and then take
$K=a_\ast \mathscr F'(L[1])(1)$, where $\mathscr F'$ is the Fourier
transform operator defined as above but interchanging the roles of
$\mathbb A$ and $\mathbb A'$, $a:\mathbb A\to\mathbb A$ is the
$k$-morphism corresponding to the $k$-algebra homomorphism
$$k[x]\to k[x],\; x\mapsto -x,$$ and $(1)$ denotes the Tate twist.
\end{remark}

\begin{remark} As one can see from the proof of Wiener's Tauberian
Theorem 0.1, the condition $\hat K_1(\xi)\not=0$ for all $\xi$
ensures that for any $g\in L^1(\mathbb R)$ such that $\hat g$ has
compact support, there exists $g_1\in L^1(\mathbb R)$ such that
$\hat g=\hat g_1\hat K_1$ and $g=g_1\ast K_1$. So the closure of the
ideal generated by $K_1$ in $L^1(\mathbb R)$. This corresponds to
the condition in Theorem 0.2 that $\mathscr F(K)=L[1]$ for a lisse
sheaf $L$ of rank $1$ on $\mathbb A'$. Indeed, for any $G\in
\mathrm{ob}\, D_c^b(\mathbb A,\overline{\mathbb Q}_\ell)$, we have
\begin{eqnarray*}
\mathscr F(G)&\cong&(\mathscr F(G)\otimes L^{-1})\otimes L\\
&\cong& (\mathscr F(G)\otimes L^{-1}[-1])\otimes \mathscr F(K).
\end{eqnarray*}
It follows that $$G\cong G_1\ast K,$$ where $G_1=a_\ast\mathscr
F'(\mathscr F (G)\otimes L^{-1})(1)$.
\end{remark}

\begin{remark}
It is interesting to find a Tauberian theorem in the case where $k$
is of characteristic $0$. In this case, the Fourier transform is not
available. We need to find a convenient condition on $K$ which
ensures that for any $G\in\mathrm{ob}\,D_c^b(\mathbb
A,\overline{\mathbb Q}_\ell)$, their exists
$G_1\in\mathrm{ob}\,D_c^b(\mathbb A,\overline{\mathbb Q}_\ell)$ such
that $G\cong G_1*K.$

By \cite{Ko} Theorem II 8.1, the condition $\hat K(\xi)\not=0$ for
all $\xi$ in Wiener's Theorem 0.1 is equivalent to the condition
that if $K*f=0$ for some $f\in L^\infty(\mathbb R)$, then we have
$f=0$. So to obtain a Tauberian theorem for $\ell$-adic sheaves, we
may try to find a condition on a perverse sheaf $K$ on $\mathbb A$
which ensures that for any $G\in\mathrm{ob}\,D_c^b(\mathbb
A,\overline{\mathbb Q}_\ell)$ such that $K*G=0$, we have $G=0$.
\end{remark}

\section{Proof of the Theorem}

Keep the notations in the introduction. Denote by
$$\bar\pi:\mathbb P\times_k\mathbb P'\to \mathbb P, \;
\bar\pi':\mathbb P\times_k\mathbb P'\to \mathbb P'$$ the
projections, by $\alpha:\mathbb A\hookrightarrow \mathbb P$ and
$\alpha':\mathbb A'\hookrightarrow \mathbb P'$ the immersions, and
by $\overline {\mathscr L_\psi}(xx')$ the sheaf
$(\alpha\times\alpha')_!\mathscr L_\psi(xx')$ on $\mathbb
P\times_k\mathbb P'$. For any $\overline{\mathbb
Q}_\ell$-representation $V$ of $\mathrm{Gal}(\bar \eta_x/\eta_x)$ or
$\mathrm{Gal}(\bar\eta_{x'}/\eta_{x'})$ and any interval $(a,b)$ in
$\mathbb R$, denote by $V^{(a,b)}$ the largest subspace of $V$ with
breaks lying in $(a,b)$.

\begin{lemma} Let $L$, $U$ an $V$ be $\overline{\mathbb Q}_\ell$-representations
of $\mathrm{Gal}(\bar \eta_x/\eta_x)$. Suppose either
$L^{(1,\infty)}=0$ or $U^{[0,1]}= V^{[0,1]}=0.$

(i) If $U^{(1,\infty)}\cong V^{(1,\infty)},$ then $(L\otimes
U)^{(1,\infty)}\cong (L\otimes V)^{(1,\infty)}.$

(ii) Suppose furthermore that $L$ has rank $1$, and all the breaks
of $L\otimes U^{(1,\infty)}$ and $L\otimes V^{(1,\infty)}$ lie in
$(1,\infty)$. If $(L\otimes U)^{(1,\infty)}\cong (L\otimes
V)^{(1,\infty)},$ then $U^{(1,\infty)}\cong V^{(1,\infty)}.$
\end{lemma}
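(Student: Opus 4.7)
The plan is to derive both parts from the canonical slope (break) decomposition $W=\bigoplus_\lambda W(\lambda)$ that every $\overline{\mathbb Q}_\ell$-representation $W$ of $\mathrm{Gal}(\bar\eta_x/\eta_x)$ admits, combined with the standard tensor rule: for pure pieces $A$ of break $\mu$ and $B$ of break $\lambda$, the tensor product $A\otimes B$ is pure of break $\max(\mu,\lambda)$ when $\mu\neq\lambda$, and has all breaks $\leq\mu$ when $\mu=\lambda$. Since in part (ii) the sheaf $L$ is of rank one, it is invertible under tensor product, with inverse $L^{-1}$, and this invertibility is what will let me propagate isomorphisms in both directions.

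I would split the argument along the two alternatives in the standing hypothesis. First suppose $L^{(1,\infty)}=0$, so every break of $L$ lies in $[0,1]$. For any $W$, decompose $L\otimes W=(L\otimes W^{[0,1]})\oplus(L\otimes W^{(1,\infty)})$. By the tensor rule the first summand has all breaks in $[0,1]$, while every component $L(\mu)\otimes W(\lambda)$ of the second summand has $\mu\leq1<\lambda$, so is pure of break $\lambda>1$; hence the slope decomposition of $L\otimes W$ gives the identification
\[
(L\otimes W)^{(1,\infty)}\;=\;L\otimes W^{(1,\infty)}.
\]
Part (i) falls out by applying this to $W=U$ and $W=V$, and part (ii) follows by tensoring the isomorphism $L\otimes U^{(1,\infty)}\cong L\otimes V^{(1,\infty)}$ with $L^{-1}$.

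Next suppose $U^{[0,1]}=V^{[0,1]}=0$, so that $U=U^{(1,\infty)}$ and $V=V^{(1,\infty)}$. Part (i) is tautological here: an isomorphism $U\cong V$ induces an isomorphism $L\otimes U\cong L\otimes V$ whose $(1,\infty)$-part is still an isomorphism. For part (ii), the extra hypothesis that all breaks of $L\otimes U^{(1,\infty)}=L\otimes U$ and $L\otimes V^{(1,\infty)}=L\otimes V$ lie in $(1,\infty)$ translates precisely into $(L\otimes U)^{(1,\infty)}=L\otimes U$ and $(L\otimes V)^{(1,\infty)}=L\otimes V$, so the given isomorphism reads $L\otimes U\cong L\otimes V$, and a final tensoring with $L^{-1}$ concludes.

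There is no genuine obstacle; the statement is essentially bookkeeping with the tensor rule on breaks. The point worth highlighting is that the extra break condition in (ii) is exactly what rules out the ``cancellation'' coming from the inequality $\mathrm{break}(A\otimes B)\leq\mu$ when $A$ and $B$ share a common break $\mu$; without it, twisting by $L$ could collapse the $(1,\infty)$-part into lower slopes and destroy the invertibility of the operation $W\mapsto(L\otimes W)^{(1,\infty)}$ on the category of representations with breaks in $(1,\infty)$.
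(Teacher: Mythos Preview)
Your argument is correct and follows essentially the same approach as the paper: both proofs rest on the break decomposition together with the standard tensor rule for breaks, and both conclude (ii) by tensoring with $L^{-1}$. The only cosmetic difference is that the paper derives a single uniform identity $(L\otimes U)^{(1,\infty)}\cong (L\otimes U^{(1,\infty)})^{(1,\infty)}$ valid under either alternative of the standing hypothesis, whereas you split into the two cases and in the first obtain the slightly sharper $(L\otimes W)^{(1,\infty)}=L\otimes W^{(1,\infty)}$; this is not a genuinely different route.
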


\begin{proof} We have decompositions
$$L\cong L^{[0,1]}\bigoplus L^{(1,\infty)},\; U\cong U^{[0,1]}\bigoplus
U^{(1,\infty)}.$$ It follows that
$$L\otimes U\cong  (L^{[0,1]}\otimes U^{[0,1]})\bigoplus
(L^{(1,\infty)}\otimes U^{[0,1]})\bigoplus (L\otimes
U^{(1,\infty)}).$$ Note that the breaks of $L^{[0,1]}\otimes
U^{[0,1]}$ lie in $[0,1]$, and the breaks of $L^{(1,\infty)}\otimes
U^{[0,1]}$ lies in $(1,\infty)$. It follows that
$$(L\otimes U)^{(1,\infty)}\cong
(L^{(1,\infty)}\otimes U^{[0,1]}) \bigoplus (L\otimes
U^{(1,\infty)})^{(1,\infty)}.$$ Since  either $L^{(1,\infty)}=0$ or
$U^{[0,1]}= 0$, we have
$$(L\otimes U)^{(1,\infty)}\cong (L\otimes
U^{(1,\infty)})^{(1,\infty)}.$$ We have a similar equation for $V$.
Our assertion follows immediately.
\end{proof}

\begin{lemma} Let $H$ be a perverse sheaf on $\mathbb A$ and let
$S\subset \mathbb A$ be the set of those closed points $s$ in
$\mathbb A$ such that either $\mathscr H^0(H)_{\bar s}\not=0$ or
$\mathscr H^{-1}(H)$ is not a lisse sheaf near $s$. Then we have
\begin{eqnarray*}
\Big(\mathscr H^{-1}(\mathscr
F(H))_{\bar\eta_{\infty'}}\Big)^{(1,\infty)}&\cong& \mathscr
F^{(\infty,\infty')}(\mathscr H^{-1}(H)_{\bar\eta_\infty}),\\
\Big(\mathscr H^{-1}(\mathscr
F(H))_{\bar\eta_{\infty'}}\Big)^{[0,1]}&\cong&\bigoplus_{s\in S}
R^0\Phi_{\bar \eta_{\infty'}}\big(\bar\pi^\ast \alpha_!H\otimes
\overline {\mathscr L}_\psi (xx')\big)_{(s,\infty')}.
\end{eqnarray*}
\end{lemma}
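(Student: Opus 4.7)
The plan is to realize this lemma as an instance of Laumon's principle of stationary phase (\cite{L}), applied to the perverse sheaf $H$ at the place $\infty'$. The two displayed isomorphisms are the ``tame part'' and ``wild part'' of the local Fourier transform, and they can be read off from the stationary phase decomposition together with the standard break bounds for the local Fourier transforms $\mathscr F^{(s,\infty')}$.

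First, I would pass to the compactification: since $\bar\pi'$ is proper and $\bar\pi^\ast \alpha_! H \otimes \overline{\mathscr L}_\psi(xx')$ restricts on $\mathbb A\times_k \mathbb A'$ to $\pi^\ast H\otimes \mathscr L_\psi(xx')$, the complex $\mathscr F(H)[-1]$ is the restriction to $\mathbb A'$ of $R\bar\pi'_\ast(\bar\pi^\ast \alpha_!H \otimes \overline{\mathscr L}_\psi(xx'))$. Taking the stalk at $\bar\eta_{\infty'}$ and comparing the generic fiber of $\bar\pi'$ over $\bar\eta_{\infty'}$ with the nearby fiber expresses $\mathscr F(H)_{\bar\eta_{\infty'}}$ as a sum of vanishing-cycle contributions at the points of the fiber $\mathbb P\times \{\infty'\}$. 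Because $H$ is perverse and $\mathscr F(H)$ is perverse, the $\mathscr H^0$ terms (which have finite support) disappear at the generic point $\bar\eta_{\infty'}$, so the stalk reduces to $\mathscr H^{-1}(\mathscr F(H))_{\bar\eta_{\infty'}}$, and only the $R^0\Phi$ of the vanishing cycle spectral sequence survives.

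Next, I would invoke Laumon's stationary phase formula (\cite{L}). The sheaf $\bar\pi^\ast \alpha_!H\otimes \overline{\mathscr L}_\psi(xx')$ on $\mathbb P\times \bar\eta_{\infty'}$ has singularities only at closed points $(s,\infty')$ with $s$ either in $S$ (where $H$ itself fails to be lisse with vanishing $\mathscr H^0$) or equal to $\infty$ (where $\mathscr L_\psi(xx')$ becomes wildly ramified along $\mathbb P\times\{\infty'\}$). This gives a decomposition
\[
\mathscr H^{-1}(\mathscr F(H))_{\bar\eta_{\infty'}}\ \cong\ \mathscr F^{(\infty,\infty')}(\mathscr H^{-1}(H)_{\bar\eta_\infty})\ \oplus\ \bigoplus_{s\in S}R^0\Phi_{\bar\eta_{\infty'}}\bigl(\bar\pi^\ast\alpha_!H\otimes \overline{\mathscr L}_\psi(xx')\bigr)_{(s,\infty')},
\]
where by Laumon's definitions the contribution at $(s,\infty')$ with $s\in\mathbb A$ is a model for the local Fourier transform $\mathscr F^{(s,\infty')}$ applied to the local data of $H$ at $s$.

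Finally, to produce the break decomposition I would invoke the break-theoretic properties of the local Fourier transforms from \cite{L}: $\mathscr F^{(\infty,\infty')}$ is an equivalence between representations of $\mathrm{Gal}(\bar\eta_\infty/\eta_\infty)$ with all breaks in $(1,\infty)$ and representations of $\mathrm{Gal}(\bar\eta_{\infty'}/\eta_{\infty'})$ with all breaks in $(1,\infty)$, while for $s\in\mathbb A$ the local Fourier transform $\mathscr F^{(s,\infty')}$ has image with all breaks in $[0,1]$. Matching these two ranges against the canonical splitting $V\cong V^{[0,1]}\oplus V^{(1,\infty)}$ separates the displayed direct sum into the two isomorphisms claimed.

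The hard part will be justifying carefully that the vanishing cycle term at $(s,\infty')$, for $s\in\mathbb A$, is really identified with $R^0\Phi_{\bar\eta_{\infty'}}$ of $\bar\pi^\ast\alpha_!H\otimes \overline{\mathscr L}_\psi(xx')$ (as opposed to a more complicated shifted object), and that the break bounds on these contributions are sharp enough to force them into $[0,1]$ rather than leaking into $(1,\infty)$; both are worked out inside Laumon's framework, so the bulk of the argument is a careful bookkeeping of shifts together with an appeal to the structural results of \cite{L}.
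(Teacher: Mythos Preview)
Your approach is correct and is essentially the paper's: both arguments rest on Laumon's stationary phase formula \cite{L}~2.3.3.1 to obtain the decomposition
\[
\mathscr H^{-1}(\mathscr F(H))_{\bar\eta_{\infty'}}\ \cong\ \bigoplus_{s\in S} R^0\Phi_{\bar\eta_{\infty'}}\bigl(\bar\pi^\ast\alpha_!H\otimes \overline{\mathscr L}_\psi(xx')\bigr)_{(s,\infty')}\ \oplus\ \mathscr F^{(\infty,\infty')}(\mathscr H^{-1}(H)_{\bar\eta_\infty}),
\]
and then on the break bounds of \cite{L}~2.4.3 to split off the $(1,\infty)$ and $[0,1]$ parts.

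The one place where the paper's execution differs from yours is precisely the point you flag as ``the hard part'': showing that each $R^0\Phi$ term at a finite $s$ has all breaks in $[0,1]$. You propose to identify this term with $\mathscr F^{(s,\infty')}$ applied to the local Galois data and then quote \cite{L}~2.4.3(i)(b). But for a general perverse $H$ this identification is not literal, because $H$ may have nonzero $\mathscr H^0(H)_{\bar s}$, so the local data at $s$ is not merely a Galois representation. The paper sidesteps this by introducing the open immersion $j:\mathbb A\setminus S\hookrightarrow\mathbb A$ and the distinguished triangle $j_!j^\ast H\to H\to\Delta\to$, with $\Delta$ of finite support. For $j_!j^\ast H$ the stationary phase terms at $s\in S$ are honestly $\mathscr F^{(s,\infty')}(\mathscr H^{-1}(H)_{\bar\eta_s})$, so \cite{L}~2.4.3(i)(b) applies verbatim; and the Fourier transform of $\Delta$ is a successive extension of sheaves $\mathscr L_\psi(ax')$, hence also has breaks $\le 1$ at $\infty'$. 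Combining the two pieces gives the $(1,\infty)$ part of $\mathscr H^{-1}(\mathscr F(H))_{\bar\eta_{\infty'}}$ as $\mathscr F^{(\infty,\infty')}(\mathscr H^{-1}(H)_{\bar\eta_\infty})$, and the $[0,1]$ part then drops out of the displayed decomposition above. This device is exactly what is needed to close the gap you identified; with it, your outline becomes the paper's proof.
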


\begin{proof} Let $j:\mathbb A-S\to \mathbb A$ be the open immersion, and
let $\Delta$ be the mapping cone of the canonical morphism
$j_!j^\ast H\to H$. Then $\Delta$ has finite support. Hence
$\mathscr H^i(\mathscr F(\Delta))_{\bar\eta_{\infty'}}$ are
extensions of $\mathscr L_\psi(ax')|_{\bar\eta_{\infty'}}$ for some
$a\in k$. In particular, they have no subspace with breaks lying in
$(1,\infty)$. We have a distinguished triangle
$$\mathscr F(j_!j^\ast H)\to\mathscr F(H)\to \mathscr F(\Delta)\to.$$
It follows that $$\Big(\mathscr H^{-1}(\mathscr F(H))_{\bar
\eta_{\infty'}}\Big)^{(1,\infty)}\cong \Big(\mathscr H^{-1}(\mathscr
F(j_!j^\ast H))_{\bar \eta_{\infty'}}\Big)^{(1,\infty)}.$$ By
\cite{L} 2.3.3.1, we have
\begin{eqnarray}
\mathscr H^{-1}(\mathscr F(j_!j^\ast H))_{\bar \eta_{\infty'}} \cong
\bigoplus_{s\in S} \mathscr F^{(s,\infty')}( \mathscr
H^{-1}(H)_{\bar \eta_s})\bigoplus \mathscr
F^{(\infty,\infty')}(\mathscr H^{-1}(H)_{\bar \eta_{\infty}}).
\end{eqnarray}  We have $$\mathscr
F^{(s,\infty')}( \mathscr H^{-1}(H)_{\bar\eta_s})\cong \mathscr
F^{(0,\infty')}( \mathscr H^{-1}(H)_{\bar\eta_s})\otimes \mathscr
L_\psi(sx')|_{\bar\eta_{\infty'}}.$$ So by \cite{L} 2.4.3 (i) (b),
$\mathscr F^{(s,\infty')}( \mathscr H^{-1}(H)_{\bar\eta_s})$ has
breaks lying in $[0,1]$. By \cite{L} 2.4.3 (iii) (b), $\mathcal
F^{(\infty,\infty')}(\mathscr H^{-1}(H)_{\bar \eta_{\infty}})$ has
breaks lying in $(1,\infty)$. Taking the part with breaks lying in
$(1,\infty)$ on both sides of the equation (1), we get the first
equation in the lemma. By \cite{L} 2.3.3.1, we have
\begin{eqnarray}
\mathscr H^{-1}(\mathscr F(H))_{\bar \eta_{\infty'}} \cong
\bigoplus_{s\in S} R^0\Phi_{\bar \eta_{\infty'}}\Big(\bar\pi^\ast
\alpha_! H\otimes \overline {\mathscr
L}_\psi(xx')\Big)_{(s,\infty')} \bigoplus \mathscr
F^{(\infty,\infty')}(\mathscr H^{-1}(H)_{\bar \eta_{\infty}}).
\end{eqnarray}
Taking the part with breaks lying in $[0,1]$ on both sides of the
equation (2), we get the second equation in the lemma.
\end{proof}

The following proposition apparently looks more general than Theorem
0.2.

\begin{proposition} Let $K\in\mathrm{ob}\,D_c^b(\mathbb A,
\overline{\mathbb Q}_l)$ be a perverse sheaf on $\mathbb A$. Suppose
the Fourier transform $\mathscr F(K)$ is of the form $L[1]$ for some
lisse $\overline{\mathbb Q}_\ell$-sheaf $L$ on $\mathbb A'$. Let $F,
G\in\mathrm{ob}\,D_c^b(\mathbb A, \overline{\mathbb Q}_l)$ be
perverse sheaves on $\mathbb A$. Then $K*F$ and $K*G$ are perverse.
Suppose furthermore either $$\mathscr H^{-1}(\mathscr
F(F))_{\bar\eta_{\infty'}}^{[0,1]}=\mathscr H^{-1}(\mathscr
F(G))_{\bar\eta_{\infty'}}^{[0,1]}=0,$$ or
$$L_{\bar\eta_{\infty'}}^{(1,\infty)}=0.$$

(i) If $\mathscr H^{-1}(F)_{\bar\eta_\infty}\cong \mathscr
H^{-1}(G)_{\bar\eta_\infty}$, then $\mathscr
H^{-1}(K*F)_{\bar\eta_\infty}\cong \mathscr
H^{-1}(K*G)_{\bar\eta_\infty}$.

(ii) Suppose $L$ has rank $1$, and all the breaks of
$$L_{\bar\eta_{\infty'}}\otimes \mathscr H^{-1}(\mathscr
F(F))_{\bar\eta_{\infty'}}^{(1,\infty)}\hbox{ and }
L_{\bar\eta_{\infty'}}\otimes \mathscr H^{-1}(\mathscr
F(G)_{\bar\eta_{\infty'}})^{(1,\infty)}$$ lie in $(1,\infty)$. If
$\mathscr H^{-1}(K*F)_{\bar\eta_\infty}\cong \mathscr
H^{-1}(K*G)_{\bar\eta_\infty}$, then $\mathscr
H^{-1}(F)_{\bar\eta_\infty}\cong \mathscr H^{-1}
(G)_{\bar\eta_\infty}$.
\end{proposition}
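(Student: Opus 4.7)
The plan is to transport the problem to the Fourier side using the convolution-to-tensor exchange $\mathscr F(K*F)\cong L\otimes \mathscr F(F)$ (up to a shift and Tate twist that I absorb throughout) and then reduce the desired stalk isomorphism at $\bar\eta_\infty$ to the representation-theoretic content of Lemma 1.1 via Lemma 1.2. Perversity of $K*F$ and $K*G$ comes for free: since $L$ is lisse on $\mathbb A'$ and $\mathscr F(F)$, $\mathscr F(G)$ are perverse (Fourier preserves perversity), the tensors $L\otimes\mathscr F(F)$ and $L\otimes\mathscr F(G)$ are perverse on $\mathbb A'$, so Fourier inversion shows that $K*F$ and $K*G$ are perverse on $\mathbb A$.

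For part (i) I assemble the following chain at $\bar\eta_{\infty'}$. Starting from $\mathscr H^{-1}(F)_{\bar\eta_\infty}\cong \mathscr H^{-1}(G)_{\bar\eta_\infty}$, apply $\mathscr F^{(\infty,\infty')}$ and invoke the first equation of Lemma 1.2 (for $F$ and for $G$) to obtain $\mathscr H^{-1}(\mathscr F(F))_{\bar\eta_{\infty'}}^{(1,\infty)}\cong \mathscr H^{-1}(\mathscr F(G))_{\bar\eta_{\infty'}}^{(1,\infty)}$. Apply Lemma 1.1(i) with $U=\mathscr H^{-1}(\mathscr F(F))_{\bar\eta_{\infty'}}$, $V=\mathscr H^{-1}(\mathscr F(G))_{\bar\eta_{\infty'}}$ and $L=L_{\bar\eta_{\infty'}}$---the two alternative hypotheses in Proposition 1.3 are exactly those of Lemma 1.1---to obtain $(L\otimes U)^{(1,\infty)}\cong (L\otimes V)^{(1,\infty)}$. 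By the tensor formula the left side equals $\mathscr H^{-1}(\mathscr F(K*F))_{\bar\eta_{\infty'}}^{(1,\infty)}$ and analogously for $G$. A final application of Lemma 1.2 to $K*F$, $K*G$, together with the fact that $\mathscr F^{(\infty,\infty')}$ is an equivalence of categories on representations with breaks in $(1,\infty)$, yields the isomorphism of the $(1,\infty)$-break parts at $\bar\eta_\infty$. Part (ii) traverses the same chain in reverse using Lemma 1.1(ii); the rank-one and break hypotheses imposed in Proposition 1.3(ii) are precisely what Lemma 1.1(ii) demands.

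The main obstacle is that the above chain only controls the $(1,\infty)$-break part of the stalks, while the conclusion is for the full $I_\infty$-representation $\mathscr H^{-1}(K*F)_{\bar\eta_\infty}$. To treat the $[0,1]$-break part I apply Lemma 1.2 with the roles of $\mathbb A$ and $\mathbb A'$ interchanged to the perverse sheaves $\mathscr F(F)$ and $\mathscr F(K*F)\cong L\otimes \mathscr F(F)$ on $\mathbb A'$: this writes $\mathscr H^{-1}(F)_{\bar\eta_\infty}^{[0,1]}$ and $\mathscr H^{-1}(K*F)_{\bar\eta_\infty}^{[0,1]}$ as direct sums of vanishing-cycle contributions indexed by the singular set $S'_F$ of $\mathscr F(F)$ on $\mathbb A'$, each summand carrying an intrinsic $\mathscr L_\psi(s'x)|_{\bar\eta_\infty}$-twist that identifies $s'$. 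Because $L$ is lisse on $\mathbb A'$, tensoring by $L$ commutes with $R^0\Phi$ at each $s'\in S'_F$ as multiplication by the stalk $L_{\bar s'}$; matching $s'$-summands in the given isomorphism $\mathscr H^{-1}(F)_{\bar\eta_\infty}\cong \mathscr H^{-1}(G)_{\bar\eta_\infty}$ and tensoring each by the corresponding $L_{\bar s'}$ then delivers the isomorphism for the $[0,1]$-break parts of $K*F$ and $K*G$, completing (i); the reverse direction handles (ii) analogously.
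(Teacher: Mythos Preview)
Your proposal is correct and follows essentially the same route as the paper: both arguments write $K*F\cong a_\ast\mathscr F'(L\otimes\mathscr F(F))(1)$, split the stalk at $\bar\eta_\infty$ into its $[0,1]$- and $(1,\infty)$-break pieces, handle the $(1,\infty)$-piece via the local Fourier equivalence $\mathscr F^{(\infty,\infty')}$ together with Lemma~1.1, and handle the $[0,1]$-piece via the vanishing-cycle decomposition over the singular set of $\mathscr F(F)$, using lissity of $L$ to pull out the stalks $L_{\bar s'}$. The only cosmetic differences are that you invoke Lemma~1.2 directly where the paper reapplies \cite{L}~2.3.3.1, and that you make explicit (via the $\mathscr L_\psi(s'x)$-twist, which forces the untwisted part to have breaks $<1$) why the $s'$-indexed summands can be matched across the two sides---a point the paper compresses into the single phrase ``Since $L$ is lisse on $\mathbb A'$''.
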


\begin{proof} Denote the Fourier transforms of $K$ and $F$ by
$\widehat K$ and $\widehat F$, respectively. Let $a:\mathbb
A\to\mathbb A$ be the $k$-morphism corresponding to the $k$-algebra
homomorphism
$$k[x]\to k[x],\; x\mapsto -x.$$ By \cite{L} 1.2.2.1 and 1.2.2.7,
we have
\begin{eqnarray*}
K*F&\cong& a_\ast \mathscr F'\mathscr F(K*F)(1)\\
&\cong& a_\ast \mathscr F'(\mathscr F(K)\otimes \mathscr
F(F))[-1](1)\\
&\cong& a_\ast \mathscr F'(L\otimes \mathscr F(F))(1).
\end{eqnarray*}
So by \cite {L} 1.3.2.3, $K*F$ is perverse. Let $S'\subset \mathbb
A'$ be the set of those closed points $s'$ in $\mathbb A'$ such that
either $\mathscr H^0(\mathscr F(F))_{\bar s'}\not=0$ or $\mathscr
H^{-1}(\mathscr F(F))$ is not a lisse sheaf near $s'$. By \cite{L}
2.3.3.1, we have
\begin{eqnarray*}
\mathscr H^{-1}\Big(\mathscr F'(L\otimes \mathscr F(F))\Big)_{\bar
\eta_{\infty}} \cong &\bigoplus_{s'\in S'}& R^0\Phi_{\bar
\eta_\infty}\Big(\bar\pi'^\ast \alpha'_!\big(L\otimes\mathscr
F(F)\big)\otimes
\overline {\mathscr L}_\psi(xx')\Big)_{(\infty, s')}\\
&\bigoplus& \mathscr F^{(\infty',\infty)}(L_{\bar
\eta_{\infty'}}\otimes \mathscr H^{-1}(\mathscr F(F))_{\bar
\eta_{\infty'}}).
\end{eqnarray*}
Since $L$ is lisse on $\mathbb A'$, we have
$$R^0\Phi_{\bar
\eta_\infty}\Big(\bar\pi'^\ast\alpha'_! \big(L\otimes\mathscr
F(F)\big)\otimes \overline {\mathscr L}_\psi(xx')\Big)_{(\infty,
s')}\cong L_{\bar s'}\otimes R^0\Phi_{\bar
\eta_\infty}\big(\bar\pi'^\ast \alpha'_!\mathscr F(F)\otimes
\overline{\mathscr L}_\psi(xx')\big)_{(\infty, s')}.$$ Denote also
by $a$ the morphism $\eta_{\infty}\to \eta_\infty$ induced by $a$.
We have
\begin{eqnarray*}
\mathscr H^{-1}(K*F)_{\bar\eta_\infty}\cong
a_\ast\Big(&\bigoplus_{s'\in S'}& L_{\bar s'}\otimes R^0\Phi_{\bar
\eta_\infty}\big(\bar\pi'^\ast\alpha'_! \mathscr F(F)\otimes
\overline {\mathscr L}_\psi(xx')\big)_{(\infty, s')}\\
&\bigoplus& \mathscr F^{(\infty',\infty)}(L_{\bar
\eta_{\infty'}}\otimes \mathscr H^{-1}(\mathscr F(F))_{\bar
\eta_{\infty'}})\Big)(1).
\end{eqnarray*}
By Lemma 1.2, we have
\begin{eqnarray*}
\mathscr H^{-1}(K*F)_{\bar\eta_\infty}^{(1,\infty)}&\cong&
a_\ast\big(\mathscr F^{(\infty',\infty)}(L_{\bar
\eta_{\infty'}}\otimes \mathscr H^{-1}(\mathscr F(F))_{\bar
\eta_{\infty'}})\big)(1),\\
\mathscr
H^{-1}(K*F)_{\bar\eta_\infty}^{[0,1]}&\cong&a_\ast\Big(\bigoplus_{s'\in
S'} L_{\bar s'}\otimes R^0\Phi_{\bar
\eta_\infty}\big(\bar\pi'^\ast\alpha'_! \mathscr F(F)\otimes
\overline {\mathscr L}_\psi(xx')\big)_{(\infty, s')}\Big)(1).
\end{eqnarray*}
Similarly, we have
\begin{eqnarray*} \mathscr
H^{-1}(F)_{\bar\eta_\infty}^{(1,\infty)}&\cong& a_\ast\big(\mathscr
F^{(\infty',\infty)}(\mathscr H^{-1}(\mathscr F(F))_{\bar
\eta_{\infty'}})\big)(1),\\
\mathscr
H^{-1}(F)_{\bar\eta_\infty}^{[0,1]}&\cong&a_\ast\Big(\bigoplus_{s'\in
S'} R^0\Phi_{\bar \eta_\infty}\big(\bar\pi'^\ast\alpha'_! \mathscr
F(F)\otimes \overline {\mathscr L}_\psi(xx')\big)_{(\infty,
s')}\Big)(1).
\end{eqnarray*}
Let $T'\subset \mathbb A'$ be the set of those closed points $s'$ in
$\mathbb A'$ such that either $\mathscr H^0(\mathscr F(G))_{\bar
s'}\not=0$ or $\mathscr H^{-1}(\mathscr F(G))$ is not a lisse sheaf
near $s'$. We have similar equations if we replace $F$ by $G$ and
$S'$ by $T'$.

Suppose $\mathscr H^{-1}(F)_{\bar\eta_\infty}\cong \mathscr H^{-1}
(G)_{\bar\eta_\infty}$. From
\begin{eqnarray}
\mathscr H^{-1}(F)_{\bar\eta_\infty}^{[0,1]}\cong \mathscr
H^{-1}(G)_{\bar\eta_\infty}^{[0,1]}.
\end{eqnarray}
we get
\begin{eqnarray}
\begin{array}{cc}
&a_\ast\Big(\bigoplus_{s'\in S'} R^0\Phi_{\bar
\eta_\infty}\big(\bar\pi'^\ast\alpha'_! \mathscr F(F)\otimes
\overline {\mathscr L}_\psi(xx')\big)_{(\infty,
s')}\Big)(1)\\
\cong& a_\ast\Big(\bigoplus_{s'\in T'} R^0\Phi_{\bar
\eta_\infty}\big(\bar\pi'^\ast\alpha'_! \mathscr F(G)\otimes
\overline {\mathscr L}_\psi(xx')\big)_{(\infty, s')}\Big)(1).
\end{array}
\end{eqnarray}
Since $L$ is lisse on $\mathbb A'$, it follows that
\begin{eqnarray}
\begin{array}{cc}
&a_\ast\Big(\bigoplus_{s'\in S'} L_{\bar s'}\otimes R^0\Phi_{\bar
\eta_\infty}\big(\bar\pi'^\ast\alpha'_! \mathscr F(F)\otimes
\overline {\mathscr L}_\psi(xx')\big)_{(\infty,
s')}\Big)(1)\\
\cong& a_\ast\Big(\bigoplus_{s'\in T'} L_{\bar s'}\otimes
R^0\Phi_{\bar \eta_\infty}\big(\bar\pi'^\ast\alpha'_! \mathscr
F(G)\otimes \overline {\mathscr L}_\psi(xx')\big)_{(\infty,
s')}\Big)(1).
\end{array}
\end{eqnarray}
that is,
\begin{eqnarray}
\mathscr H^{-1}(K*F)_{\bar\eta_\infty}^{[0,1]}\cong \mathscr
H^{-1}(K*G)_{\bar\eta_\infty}^{[0,1]}.
\end{eqnarray}
From
\begin{eqnarray}
\mathscr H^{-1}(F)_{\bar\eta_\infty}^{(1,\infty)}\cong \mathscr
H^{-1}(G)_{\bar\eta_\infty}^{(1,\infty)}.
\end{eqnarray}
we get
\begin{eqnarray}
a_\ast\big(\mathscr F^{(\infty',\infty)}(\mathscr H^{-1}(\mathscr
F(F))_{\bar \eta_{\infty'}})\big)(1)\cong  a_\ast\big(\mathscr
F^{(\infty',\infty)}(\mathscr H^{-1}(\mathscr F(G))_{\bar
\eta_{\infty'}})\big)(1).
\end{eqnarray}
So we have
\begin{eqnarray}
\mathscr F^{(\infty',\infty)}(\mathscr H^{-1}(\mathscr F(F))_{\bar
\eta_{\infty'}})\cong \mathscr F^{(\infty',\infty)}(\mathscr
H^{-1}(\mathscr F(G))_{\bar \eta_{\infty'}}).
\end{eqnarray}
This is equivalent to
\begin{eqnarray}
\mathscr H^{-1}(\mathscr F(F))_{\bar
\eta_{\infty'}}^{(1,\infty)}\cong \mathscr H^{-1}(\mathscr
F(G))_{\bar \eta_{\infty'}}^{(1,\infty)}
\end{eqnarray}
by \cite{L} 2.4.3 (iii) (b) and (c). By Lemma 1.1, we have
\begin{eqnarray}
(L_{\bar\eta_{\infty'}}\otimes\mathscr H^{-1}(\mathscr F(F))_{\bar
\eta_{\infty'}})^{(1,\infty)}\cong
(L_{\bar\eta_{\infty'}}\otimes\mathscr H^{-1}(\mathscr F(G))_{\bar
\eta_{\infty'}})^{(1,\infty)}.
\end{eqnarray}
Hence
\begin{eqnarray}
\mathscr F^{(\infty',\infty)}(L_{\bar\eta_{\infty'}}\otimes\mathscr
H^{-1}(\mathscr F(F))_{\bar \eta_{\infty'}})\cong \mathscr
F^{(\infty',\infty)} (L_{\bar\eta_{\infty'}}\otimes\mathscr
H^{-1}(\mathscr F(G))_{\bar \eta_{\infty'}}).
\end{eqnarray}
So we have
\begin{eqnarray}
\mathscr H^{-1}(K*F)_{\bar\eta_\infty}^{(1,\infty)}\cong \mathscr
H^{-1}(K*G)_{\bar\eta_\infty}^{(1,\infty)}.
\end{eqnarray}
By equations (6) and (13), we have $$\mathscr
H^{-1}(K*F)_{\bar\eta_\infty}\cong \mathscr
H^{-1}(K*G)_{\bar\eta_\infty}.$$

The above argument can be reversed. We have the following
implications for the above equations:
\begin{eqnarray*}
&&(3)\Leftrightarrow (4)\Rightarrow (5)\Leftrightarrow (6),\\
&& (7)\Leftrightarrow (8) \Leftrightarrow (9) \Leftrightarrow
(10)\Rightarrow (11)\Leftrightarrow (12)\Leftrightarrow (13).
\end{eqnarray*}
Suppose $L$ has rank 1, then we have $(5)\Rightarrow (4)$. Suppose
furthermore that all the breaks of
$$L_{\bar\eta_{\infty'}}\otimes \mathscr H^{-1}(\mathscr
F(F))_{\bar\eta_{\infty'}}^{(1,\infty)}\hbox{ and }
L_{\bar\eta_{\infty'}}\otimes \mathscr H^{-1}(\mathscr
F(G)_{\bar\eta_{\infty'}})^{(1,\infty)}$$ lie in $(1,\infty)$. Then
by Lemma 1.1, we have $(11)\Rightarrow (10)$. If we have $\mathscr
H^{-1}(K*F)_{\bar\eta\infty}\cong \mathscr
H^{-1}(K*G)_{\bar\eta\infty},$ then (6) and (13) holds. It follows
that (3) and (7) holds. We thus have $\mathscr
H^{-1}(F)_{\bar\eta_\infty}\cong\mathscr H^{-1}
(G)_{\bar\eta_\infty}.$
\end{proof}

\begin{proof}[Proof of Theorem 0.2.] Theorem 0.2 follows directly
from Proposition 1.3 by taking $F=M[1]$ and $G=N[1]$. Since $M$ and
$N$ are lisse, by \cite{L} 2.3.3.1 (iii), we have
$$\mathscr H^{-1}(\mathscr F(F))_{\bar\eta_{\infty'}}\cong
\mathscr F^{(\infty,\infty')}(M_{\bar\eta_\infty}).$$ By \cite{L}
2.4.3 (iii) (b), the breaks of $\mathscr
F^{(\infty,\infty')}(M_{\bar\eta_\infty})$ lie in $(1,\infty)$.
Using this fact, one checks that the conditions of Proposition 1.3
hold.
\end{proof}

\begin{remark} Proposition 1.3 is actually not more general than
Theorem 0.2. Indeed, if $$\mathscr H^{-1}(\mathscr
F(F))_{\bar\eta_{\infty'}}^{[0,1]}=0,$$ then $\mathscr F'\mathscr
F(F)$ is lisse on $\mathbb A$ by \cite{L} 2.3.1.3 (ii), and hence
$F=M[1]$ for some lisse sheaf $M$ on $\mathbb A$. So if we assume
the condition
$$\mathscr H^{-1}(\mathscr
F(F))_{\bar\eta_{\infty'}}^{[0,1]}=\mathscr H^{-1}(\mathscr
F(G))_{\bar\eta_{\infty'}}^{[0,1]}=0,$$ then Proposition 1.3 is
exactly Theorem 0.2. If $L_{\bar\eta_{\infty'}}^{(1,\infty)}=0$,
then by the formula \cite{L} 2.3.1.1 (i)$'$, $K$ is a perverse sheaf
with finite support. In this case, Proposition 1.3 can be proved
directly.
\end{remark}

\end{document}